\newtheorem{thm}{Theorem}[section]
\newtheorem{cor}[thm]{Corollary}
\theoremstyle{definition} \newtheorem{ex}[thm]{Example}
\theoremstyle{definition} \newtheorem{rmk}[thm]{Remark}
\newcommand{\qq}{\mathbb{Q}}
\newcommand{\sst}{\mathrm{ss}}
\title{Semistable models of elliptic curves over residue characteristic 2}
\author{Jeffrey Yelton}
\begin{document}

\maketitle

\begin{abstract}

Given an elliptic curve $E$ in Legendre form $y^2 = x(x - 1)(x - \lambda)$ over the fraction field of a Henselian ring $R$ of mixed characteristic $(0, 2)$, we present an algorithm for determining a semistable model of $E$ over $R$ which depends only on the valuation of $\lambda$.  We provide several examples along with an easy corollary concerning $2$-torsion.

\end{abstract}

\

Let $R$ be a Henselian ring of mixed characteristic $(0, 2)$ with a discrete valuation $v : K^{\times} \to \qq$ normalized so that $v(2) = 1$, and let $K$ be its fraction field.  Let $E$ be the elliptic curve over $K$ defined by an equation of the form $y^2 = f(x)$ for some separable polynomial $f(x) \in K[x]$.  After replacing $K$ by a suitable extension and possibly scaling $y$ by an element of $K$ to get an isomorphic elliptic curve, we assume that $\alpha_1, \alpha_2, \alpha_3 \in R$ with $\alpha_2 - \alpha_1 \in R^{\times}$.  After possibly applying another isomorphism which translates $x$ by $-\alpha_1$ and then scales it by $(\alpha_2 - \alpha_1)^{-1}$, we further assume that $E$ is in \textit{Legendre form}; that is, $E$ is a smooth projective model of an affine curve given by an equation of the form 
$$y^2 = f(x) := x(x - 1)(x - \lambda)$$
 with $\lambda \in R \smallsetminus \{0, 1\}$ (we denote the point at infinity by $\mathcal{O} \in E(K)$).  The purpose of this note is to explicitly find a semistable model of $E$ over a finite extension of $K$.  More precisely, we will find a finite extension $K' / K$ such that $E / K'$ has a model $E^{\sst}$ given by explicit formulas with coefficients in the ring of integers of $K'$ and which has either good or (split) multiplicative reduction.

It is well known (see for instance \cite[\S IV.1.2]{serre1989abelian} or \cite[Proposition VII.5.5]{silverman2009arithmetic}) that any elliptic curve over a discrete valuation field has good (resp. multiplicative) reduction over some finite algebraic extension of that field if and only if the valuation of its $j$-invariant is nonnegative (resp. negative).  The formula for the $j$-invariant of the Legendre curve $E$ is given as in \cite[Proposition III.1.7]{silverman2009arithmetic} by 
\begin{equation} \label{eq j} 
j(E) = 2^8 \frac{(\lambda^2 - \lambda + 1)^3}{\lambda^2 (\lambda - 1)^2}. 
\end{equation}
  For simplicity, we assume throughout this paper that $v(\lambda - 1) = 0$, noting that if $v(\lambda - 1) > 0$, then we have $v(\lambda) = 0$ and the assumption becomes true after replacing $\lambda$ by $1 - \lambda$ and applying the isomorphism given by $(x, y) \mapsto (1 - x, y)$.  It follows from this assumption and the formula in (\ref{eq j}) that $v(j(E)) = 8 - 2v(\lambda)$ and that therefore any semistable model $E^{\sst}$ has good (resp. multiplicative) reduction if and only if $v(\lambda) \leq 4$ (resp. $v(\lambda) > 4$).  This explains ``why" the formula for the $j$-invariant includes an ``extra" factor of $2^8$.  The equivalence between potential good reduction and integrality of the $j$-invariant over residue characteristic $2$ is derived by Silverman as \cite[Corollary A.1.4]{silverman2009arithmetic} by converting $E$ to its Deuring normal form and arguing via manipulations involving the $j$-invariant.  In the course of constructing a semistable model of $E$, we will show essentially the same result more directly and without invoking the $j$-invariant.

It appears that the first investigations of how to explicitly determine semistable models of curves which are Galois $p$-covers of projective lines over DVRs of residue characteristic $p$ were done by Coleman in \cite[\S6]{coleman1987computing}.  The general approach proposed by Coleman led to results of Lehr and Matignon in \cite{lehr2006wild}, where they investigate superelliptic curves given by equations of the form $y^p = f(x)$ under the assumption that the roots of $f$ are \textit{equidistant} (that is, after scaling $x$ and $y$ by appropriate powers of a uniformizer, the roots of $f$ specialize to distinct elements in the residue field).  The ``monodromy polynomial" defined in that paper, which is used to define an extenion of the DVR over which the superelliptic curve obtains semistable reduction, is essentially the polynomial $P(x)$ in the statement of Theorem \ref{thm m<4}.  However, the work of both Coleman and of Lehr-Matignon focus on finding \textit{stable} models of curves of genus greater than $1$, and although the construction in \cite{lehr2006wild} of ``stably marked" models using the monodromy polynomial also works for elliptic curves, the equidistance hypothesis assumed there translates to the restrictive assumption that $v(\lambda) = 0$ in the context of our paper.  Our goal is to describe a completely explicit method of finding semistable models of elliptic curves in Weierstrass form over mixed characteristic $(0, 2)$ which shall be presented in a more elementary fashion than the results in \cite{coleman1987computing} and \cite{lehr2006wild}.  To the best of the author's knowledge, such a method for general elliptic curves is not present in the literature, although particular examples are done in \cite[\S4.1]{bouw2015semistable} (indeed, some of the ideas and notation used in this note were inspired by \cite{bouw2015semistable}).  We believe that the strategy presented here is also applicable to determining semistable models and reduction types for hyperelliptic curves over mixed characteristic $(0, 2)$, as suggested in both \cite{coleman1987computing} and \cite{lehr2006wild}.

\subsection{Our general set-up} \label{S1}

Given a finite extension $R' / R$ of Henselian rings with fraction fields $K' / K$, by a \textit{Weierstrass model} of $E$ over $R'$ we mean an elliptic curve $E' / K'$ isomorphic over $K'$ to $E / K'$, which is determined by an equation of the form 
\begin{equation} \label{eq model} y^2 + a_1 xy + a_3 y = x^3 + a_2 x^2 + a_4 x + a_6, \end{equation}
 with all $a_i \in R'$.  We write $\bar{E}'$ for the reduction of $E'$; it is a projective curve over the residue field of $R'$.  If this curve is either smooth or has only a single node, we say that the Weierstrass model $E'$ is \textit{semistable}. By \cite[Proposition VII.5.4]{silverman2009arithmetic}, there is always a finite extension $K' / K$ and a semistable Weierstrass model $E^{\sst}$  of $E$ over $R'$.  We note that at least one of $a_1$ and $a_3$ must be a unit in $R$ to ensure that $E^{\sst}$ does not have a cusp; that $v(a_1) > 0$ is then sufficient to ensure smoothness of $\bar{E}^{\sst}$; and that $v(a_3) > 0$ on the other hand implies that $E^{\sst}$ has a node at $(0, 0)$.

An equation of the form given in (\ref{eq model}) can be converted to an equation of the form $y^2 = F(x) \in K'[x]$ for some finite extension $K' / K$ by completing the square: we replace $y$ by $y - \frac{1}{2}(a_1 x + a_3)$.  Then an isomorphism from the curve $E$ given by $y^2 = f(x)$ to the curve given by $y^2 = F(x)$ must be of the form $(x, y) \mapsto (\alpha + \beta x, \beta^{3/2} y)$ for some $\alpha, \beta \in K'$ (in fact, $K'$ will just be the extension given by adjoining the elements $\alpha$ and $\beta^{1/2}$ to $K$).  Given such elements $\alpha, \beta$, we first observe that this isomorphism maps $E$ to the curve defined by 
\begin{equation} \label{eq model F} y^2 = F(x) = F_{\alpha, \beta}(X) := (x + \alpha\beta^{-1})(x + \alpha\beta^{-1} - \beta^{-1})(x + \alpha\beta^{-1} - \lambda\beta^{-1}). \end{equation}
  Now we want to find polynomials $G(x) = x^3 + a_2 x^2 + a_4 x + a_6 \in R'[x]$ and $H(x) = a_1 x + a_3 \in R'[x]$ such that $F = G + \frac{1}{4}H^2$; then the isomorphism $(x, y) \mapsto (x, y + H(x))$ maps the curve given by (\ref{eq model F}) to the one given by (\ref{eq model}).

For each integer $n \geq 1$, we write $F^{(n)}$ for the $n$th derivative of $F$ divided by $n!$, so that $F^{(n)}(0)$ equals the coefficient of the $x^n$-term of $F$.  We compute formulas for the elements $a_1, a_2, a_3$, using the fact that 
\begin{equation} G(x) + \frac{1}{4}H(x)^2 = F(x) = x^3 + F^{(2)}(0) x^2 + F^{(1)}(0) x + F(0). \end{equation}
  Our formulas are as follows:
\begin{equation} \label{eq formulas} \begin{split} a_3 &= 2\sqrt{F(0) - a_6} \\ a_1 &= \frac{2F^{(1)}(0) - a_4}{a_3} = \frac{F^{(1)}(0) - a_4}{\sqrt{F(0) - a_6}} \\ a_2 &= F^{(2)}(0) - \frac{1}{4}a_1^2 = F^{(2)}(0) - \frac{(F^{(1)}(0) - a_4)^2}{4(F(0) - a_6)} \end{split} \end{equation}
It will be convenient to fix $a_4 = a_6 = 0$ so that the elements $a_1, a_2, a_3$ are completely determined (up to choosing a sign for $a_3$) by our choice of $\alpha$ and $\beta$ and are given by the slightly simpler formulas 
\begin{equation} \label{eq formulas2} a_1 = \frac{F^{(1)}(0)}{\sqrt{F(0)}}; \ a_2 = F^{(2)}(0) - \frac{1}{4}a_1^2 = F^{(2)}(0) - \frac{F^{(1)}(0)^2}{4F(0)}; \ a_3 = 2\sqrt{F(0)}. \end{equation}

\subsection{The $v(\lambda) < 4$ case} \label{S2}

In this section we assume that $v(j(E)) > 0$.  We then see from the formula in (\ref{eq j}) that we have $0 \leq m := v(\lambda) < 4$.  Since $j(E)$ is integral, the desired model $E^{\sst}$ should have good reduction.  For any $\lambda$ with $0 \leq v(\lambda) < 4$, we now show how to find algebraic elements $\alpha, \beta \in \bar{K}$ such that we get $v(a_1) > 0$, $v(a_3) = 0$ and even allow $a_2$ to be any integral element that we choose.

\begin{thm} \label{thm m<4}

Assume that $0 \leq m < 4$.  Choose any $\beta \in \bar{K}$ such that $v(\beta) = \frac{1}{3}m + \frac{2}{3}$ (where $v$ is extended uniquely to a discrete valuation on $K(\beta)$).  Let $\alpha$ be a root of the polynomial 
\begin{equation} \label{eq val<4} P(X) := 3X^4 - 4(1 + \lambda) X^3 + 6\lambda X^2 - \lambda^2 - \delta, \end{equation}
 where $\delta \in K(\beta^{1/2})$ satisfies $v(\delta) \geq \frac{4}{3}m + \frac{8}{3}$ (e.g. $\delta = 0$).  Then $E$ is isomorphic over $K' := K(\beta^{1/2}, \alpha, F(0)^{1/2})$ to the elliptic curve $E^{\sst}$ given by the equation in (\ref{eq model}), where $a_4 = a_6 = 0$ and the other coefficients $a_i$ are given by the formulas in (\ref{eq formulas2}).  The isomorphism $\varphi : E \stackrel{\sim}{\to} E^{\sst}$ is given by composing the map $(x, y) \mapsto (\alpha + \beta x, \beta^{3/2}y)$ with the map $(x, y) \mapsto (x, y + \frac{1}{2}(a_1 x + a_3))$.

We have $v(a_1) > 0$, $v(a_2) \geq 0$, and $v(a_3) = 0$ (which implies that $E^{\sst}$ has good reduction).  Moreover, we have 
$$a_2 = \frac{\delta}{4\beta \alpha(\alpha - 1)(\alpha - \lambda)}.$$

\end{thm}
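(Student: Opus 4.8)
The plan is to express $a_1,a_2,a_3$ in terms of $f(x)=x(x-1)(x-\lambda)$ evaluated at $\alpha$, to extract the valuation of $\alpha$ from the Newton polygon of $P$, and then to reduce all three claims to elementary valuation arithmetic. First I would read off from the roots $-\alpha\beta^{-1}$, $(1-\alpha)\beta^{-1}$, $(\lambda-\alpha)\beta^{-1}$ of $F$ in (\ref{eq model F}) the identities
\[ F(0)=\beta^{-3}f(\alpha),\qquad F^{(1)}(0)=\beta^{-2}f'(\alpha),\qquad F^{(2)}(0)=\tfrac12\beta^{-1}f''(\alpha). \]
Substituting these into (\ref{eq formulas2}) yields $a_3=2\beta^{-3/2}\sqrt{f(\alpha)}$, $a_1=\beta^{-1/2}f'(\alpha)/\sqrt{f(\alpha)}$, and $a_2=\beta^{-1}\bigl(2f''(\alpha)f(\alpha)-f'(\alpha)^2\bigr)/\bigl(4f(\alpha)\bigr)$. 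The key step is the polynomial identity
\[ 2f''(X)f(X)-f'(X)^2=3X^4-4(1+\lambda)X^3+6\lambda X^2-\lambda^2, \]
which I would verify by direct expansion; it shows that $P(X)$ in (\ref{eq val<4}) equals this quantity minus $\delta$. Hence $P(\alpha)=0$ is exactly $2f''(\alpha)f(\alpha)-f'(\alpha)^2=\delta$, which at once gives the claimed formula $a_2=\delta/\bigl(4\beta f(\alpha)\bigr)=\delta/\bigl(4\beta\alpha(\alpha-1)(\alpha-\lambda)\bigr)$ and the relation $f'(\alpha)^2=2f''(\alpha)f(\alpha)-\delta$ that I will need for $a_1$.

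Next I would compute $v(\alpha)$ via the Newton polygon of $P$. Since $\lambda+1=(\lambda-1)+2$ and $v(\lambda-1)=0<1=v(2)$, the ultrametric inequality gives $v(1+\lambda)=0$; and because $v(\delta)\geq\tfrac43 m+\tfrac83>2m$ (using $m<4$) the constant term $-\lambda^2-\delta$ has valuation exactly $2m$. Thus the relevant vertices are $(0,2m)$, $(2,1+m)$, $(3,2)$, $(4,0)$, and a short check shows that $(2,1+m)$ and $(3,2)$ lie strictly above the segment from $(0,2m)$ to $(4,0)$. The Newton polygon is therefore a single segment of slope $-m/2$, so every root of $P$ satisfies $v(\alpha)=m/2$.

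I would then evaluate $v(f(\alpha))=v(\alpha)+v(\alpha-1)+v(\alpha-\lambda)$. When $m>0$ we have $v(\alpha)=m/2>0$, so $v(\alpha-1)=0$ and $v(\alpha-\lambda)=\min\{m/2,m\}=m/2$, giving $v(f(\alpha))=m$. The one delicate case is $m=0$, where $v(\alpha)=0$ and the ultrametric inequality is not decisive; here I would pass to the residue field. Reducing $P$ gives $\bar P(X)=X^4-\bar\lambda^2=(X-\bar\lambda^{1/2})^4$, so $\bar\alpha=\bar\lambda^{1/2}$; since $\bar\lambda\neq0$ and (as $v(\lambda-1)=0$) $\bar\lambda\neq1$, none of $\bar\alpha$, $\bar\alpha-1$, $\bar\alpha-\bar\lambda=\bar\alpha(1-\bar\alpha)$ vanishes, so again $v(f(\alpha))=0=m$.

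Finally the three valuation statements drop out. Using $v(\beta)=\tfrac{m+2}{3}$ and $v(f(\alpha))=m$ in the formula for $a_3$ gives $v(a_3)=1-\tfrac{m+2}{2}+\tfrac m2=0$, so $\bar E^{\sst}$ is smooth and $E^{\sst}$ has good reduction. For $a_1$, the relation $f'(\alpha)^2=2f''(\alpha)f(\alpha)-\delta$ combined with $v(f''(\alpha))\geq1$, $v(f(\alpha))=m$, and $v(\delta)\geq\tfrac43 m+\tfrac83\geq m+2$ yields $v(f'(\alpha)^2)\geq m+2$, hence $v(f'(\alpha))\geq\tfrac m2+1$ and $v(a_1)\geq\tfrac{4-m}{6}>0$. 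For $a_2$, the formula $a_2=\delta/\bigl(4\beta f(\alpha)\bigr)$ together with $v(\delta)\geq\tfrac43 m+\tfrac83$ gives $v(a_2)\geq0$. I expect the verification of the polynomial identity in the first step and the residue-field analysis of the $m=0$ case to be the only places demanding real care; everything else is bookkeeping with the Newton polygon and the chosen valuation of $\beta$.
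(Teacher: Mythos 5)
Your proof is correct, and its skeleton is the same as the paper's: a Newton-polygon computation giving $v(\alpha) = \frac{m}{2}$, an algebraic identity turning $P(\alpha) = 0$ into the formula for $a_2$, and valuation bookkeeping for $a_1, a_2, a_3$. Within that skeleton, three of your sub-arguments genuinely differ. (i) You derive the key identity in the form $2f''(X)f(X) - f'(X)^2 = 3X^4 - 4(1+\lambda)X^3 + 6\lambda X^2 - \lambda^2$, i.e., you recognize $P(X) + \delta$ as the monodromy polynomial of $f$; the paper instead expands $4(\beta F^{(2)}(0))(\beta^3 F(0)) - (\beta^2 F^{(1)}(0))^2$ in the symmetric expressions $\alpha$, $\alpha - 1$, $\alpha - \lambda$ and grinds out the same quartic. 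Yours is the same computation better organized, and it makes explicit the link with the Lehr--Matignon monodromy polynomial that the paper's introduction only alludes to. (ii) For the delicate $m = 0$ case you reduce $P$ modulo the maximal ideal and factor $\bar P(X) = X^4 - \bar\lambda^2 = (X - \bar\lambda^{1/2})^4$ in characteristic $2$, so that $\bar\alpha = \bar\lambda^{1/2} \notin \{0, 1, \bar\lambda\}$; the paper instead checks that the Newton polygons of the shifted polynomials $P(X+1)$ and $P(X+\lambda)$ coincide with that of $P$. Both are valid; your argument pins down $\bar\alpha$ exactly, which is more information, while the paper's never leaves the Newton-polygon framework it has already set up. (iii) For $v(a_1) > 0$ you bound $v(f'(\alpha))$ directly from $f'(\alpha)^2 = 2f''(\alpha)f(\alpha) - \delta$, getting the explicit estimate $v(a_1) \geq \frac{4-m}{6}$, whereas the paper argues through $\frac{1}{4}a_1^2 = F^{(2)}(0) - a_2$ and obtains only positivity. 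One small caveat: you assert good reduction immediately after establishing $v(a_3) = 0$, but in characteristic $2$ the $y$-partial of the Weierstrass equation is $\bar a_1 x + \bar a_3$, so $\bar a_3 \neq 0$ alone does not preclude a singular point at $x = -\bar a_3/\bar a_1$ (for instance $y^2 + xy + y = x^3$ is singular at $(1,1)$ over $\mathbb{F}_2$); smoothness also requires $\bar a_1 = 0$, which you prove immediately afterwards, so this is only a matter of ordering the conclusions, not a gap.
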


\begin{proof}

Assume that we have chosen an algebraic element $\beta$ with $v(\beta) = \frac{1}{3}m + \frac{2}{3}$, an element $\delta \in K(\beta^{1/2})$ satisfying $v(\delta) = \frac{1}{3}m + \frac{2}{3}$, and a root $\alpha$ of the polynomial $P(X)$.  The first statement just reaffirms what was shown in the above discussion where the formulas for $a_1, a_2, a_3 \in K'$ were derived, so our main task is to demonstrate the desired bounds for the valuations of these elements.

We note first that $v(\lambda^2) = 2m < \frac{4}{3}m + \frac{8}{3}$, so that the constant coefficient of the polynomial $P$ has valuation equal to $2m$ regardless of our choice of $\delta$.  Then since the coefficient of $X^4$ is a unit and the coefficient of $X^3$ (resp. $X^2$) has valuation at least $2 \geq \frac{1}{2}m$ (resp. equal to $1 + m > m$), the Newton polygon of this polynomial consists of a single line segment with slope $\frac{1}{2}m$.  It follows that $v(\alpha) = \frac{1}{2}m$.  We clearly have $v(\alpha - 1) = 0$ and $v(\alpha - \lambda) = v(\alpha) = \frac{1}{2}m$ as long as $m > 0$.  If $m = 0$, we claim that these equalities still hold so that $v(\alpha - 1) = v(\alpha - \lambda) = 0$.  To see this, assume that $m = 0$ and consider the polynomials $P(X + 1)$ and $P(X + \lambda)$; it is straightforward to calculate (using the fact that $v(\lambda) = v(\lambda - 1) = 0$) that the Newton polygons of these shifted polynomials both coincide with the Newton polygon of $P$, and the claim follows from the fact that $\alpha - 1$ and $\alpha - \lambda$ are roots of the respective polynomials.  We now have 
\begin{equation} v(F(0)) = v(\alpha) + v(\alpha - 1) + v(\alpha - \lambda) - 3v(\beta) = 2v(\alpha) - 3v(\beta) = m - m - 2 = -2. \end{equation}
 The desired equality $v(a_3) = v(2\sqrt{F(0)}) = 0$ immediately follows.

Now we treat the requirement that $v(a_2) \geq 0$, using the formula for $b_2$ given in (\ref{eq formulas2}).  We use the formulas 
\begin{equation} \begin{split} \label{eq formulas3} &\beta^3 F(0) = \alpha(\alpha - 1)(\alpha - \lambda); \ \ \beta^2 F^{(1)}(0) = \alpha(\alpha - 1) + \alpha(\alpha - \lambda) + (\alpha - 1)(\alpha - \lambda); \\ 
&\beta F^{(2)}(0) = \alpha + (\alpha - 1) + (\alpha - \lambda) \end{split} \end{equation}
 to expand $4\beta^4 F(0)a_2 = 4(\beta F^{(2)}(0))(\beta^3 F(0)) - (\beta^2 F^{(1)}(0))^2$ as 
\begin{equation*} \begin{split} 4\alpha^2(\alpha - 1)(\alpha - \lambda) + &4\alpha(\alpha - 1)^2(\alpha - \lambda) + 4\alpha(\alpha - 1)(\alpha - \lambda)^2 \\ 
 - &(\alpha^2(\alpha - 1)^2 + \alpha^2(\alpha - \lambda)^2 + (\alpha - 1)^2(\alpha - \lambda)^2 \\ 
 &\hspace{2em} + 2\alpha^2(\alpha - 1)(\alpha - \lambda) + 2\alpha(\alpha - 1)^2(\alpha - \lambda) + 2\alpha(\alpha - 1)(\alpha - \lambda)^2) \end{split} \end{equation*}
\begin{equation*} \begin{split} &= 2\alpha^2(\alpha - 1)(\alpha - \lambda) + 2\alpha(\alpha - 1)^2(\alpha - \lambda) + 2\alpha(\alpha - 1)(\alpha - \lambda)^2 - \alpha^2(\alpha - 1)^2 - \alpha^2(\alpha - \lambda)^2 - (\alpha - 1)^2(\alpha - \lambda)^2 \\ 
 &= 2(2\alpha - \lambda)(\alpha)(\alpha - 1)(\alpha - \lambda) - \alpha^2(\alpha - \lambda)^2 + (\alpha - 1)^2 (2\alpha(\alpha - \lambda) - \alpha^2 - (\alpha - \lambda)^2) \\
 &= 2(2\alpha - \lambda)(\alpha)(\alpha - 1)(\alpha - \lambda) - \alpha^2(\alpha - \lambda)^2 - (\alpha - 1)^2\lambda^2 \\ 
 &= [4\alpha^4 - 2(2 + 3\lambda)\alpha^3 + 2(3\lambda + \lambda^2)\alpha^2 - 2\lambda^2\alpha] - [\alpha^4 - 2\lambda\alpha^3 + \lambda^2\alpha^2] - [\lambda^2\alpha^2 - 2\lambda^2\alpha + \lambda^2] \end{split} \end{equation*}
\begin{equation} \label{eq a_2} = 3\alpha^4 - 4(1 + \lambda)\alpha^3 + 6\lambda \alpha^2 - \lambda^2. \end{equation}
Thus, since $P(\alpha) = 0$ can be written as the above expression minus the element $\delta$, we have $\delta = 4\beta^4 F(0)a_2$ (implying the claimed formula for $a_2$).  Now the fact that $v(a_2) \geq 0$ is equivalent to saying that $v(\delta) \geq 2 + 4v(\beta) + v(F(0)) = \frac{1}{3}m + \frac{2}{3}$, which was indeed our condition for $\delta$.

It remains only to check that $v(a_1) > 0$.  Note that $v(\beta F^{(2)}(0)) \geq \min\{v(\alpha), v(1)\} = 0$.  It follows from the formula for $a_2$ in (\ref{eq formulas2}) that $v(\frac{1}{4}a_1) \geq \min\{v(F^{(2)}(0)), v(a_2)\} = v(\beta^{-1}) > -2$.  Therefore, $v(a_1) - 2 > -2$, hence the desired inequality.

\end{proof}

\begin{ex} \label{ex m=0}

Suppose we want to find a semistable model $E^{\sst}$ of the elliptic curve $E / \qq_2$ given by $y^2 = x^3 - 1$ at the prime $(2)$.  This elliptic curve is well known to be CM, and so any semistable model $E^{\sst}$ should have good reduction; we can also see this by noting that $j(E) = 0$.  In fact, $E$ is isomorphic (over $K := \qq_2(\omega)$) to the Legendre curve with $\lambda = -\omega^2$, where $\omega := \frac{1}{2}(-1 + \sqrt{-3})$ is a primitive cube root of unity; since $m = v(\lambda) = 0 < 4$, we may apply Theorem \ref{thm m<4}.

We have 
\begin{equation} P(X) = 3X^4 - 4(1 - \omega^2) X^3 - 6\omega^2 X^2 - \omega - \delta. \end{equation}
By an easy computation, plugging in $X = \omega$ to the above polynomial yields $8\omega^2 - \delta$, so we may take $\delta = 8\omega^2$ (noting that $v(\delta) = 3 \geq \frac{4}{3}m + \frac{8}{3}$) and $\alpha = \omega$.  Then we may choose $\beta$ to be any element with valuation $\frac{1}{3}m + \frac{2}{3} = \frac{2}{3}$, say $\beta = 2^{2/3}$.  Now evaluating the formulas in (\ref{eq formulas2}) yields the following equation for $E^{\sst}$ over the (abelian) extension $K' := K((-3)^{1/4}, 2^{1/3})$.
\begin{equation} y^2 - \omega(-3)^{-1/4}2^{5/3}xy + (-3)^{1/4}y = x^3 + \omega^2(-3)^{-1/2}2^{1/3} x^2 \end{equation}
  We see that $E$ and $E^{\sst}$ are isomorphic over $K'$ and that the reduction $\bar{E}^{\sst}$ is the nonsingular curve given by $y^2 + y = x^3$.

\end{ex}

\begin{ex} \label{ex m=1}

Suppose we want to find a semistable model $E^{\sst}$ of the elliptic curve $E / \qq_2$ given by $y^2 = x^3 - x$ at the prime $(2)$.  Just as in the previous example, this elliptic curve is CM, and so any semistable model $E^{\sst}$ should again have good reduction.  Moreover, $E$ is isomorphic over $\qq_2$ to the Legendre curve with $\lambda = 2$, and since $m = v(\lambda) = 1 < 4$, we may apply Theorem \ref{thm m<4}.

We let $\beta = 2$, noting that this choice of $\beta$ satisfies the requirement that $v(\beta) = \frac{1}{3}m + \frac{2}{3} = 1$.  Then we have 
\begin{equation} P(X) = 3X^4 - 12X^3 + 12X^2 - 4 - \delta. \end{equation}
One can readily check that if we set $\delta = 0$, the roots of this polynomial are $1 \pm \sqrt{1 \pm \frac{2}{\sqrt{3}}}$, where the choices of sign are independent.  We take $\alpha = 1 + \sqrt{1 + \frac{2}{\sqrt{3}}}$.  Now evaluating the formulas in (\ref{eq formulas2}) yields the following equation for $E^{\sst}$, over the extension $K' := \qq_2(2^{1/2}, 3^{1/4}, \sqrt{\sqrt{3} + 2})$ (which is abelian over $\qq_2(i)$ as it is contained in $\qq_2(\zeta_{24}, 3^{1/4})$, where $\zeta_{24}$ is a primitive $24$th root of unity).
\begin{equation} y^2 + (3^{1/4} + 3^{3/4})(1 + \frac{2}{\sqrt{3}})^{-1/4} xy + 3^{-1/4}(1 + \frac{2}{\sqrt{3}})^{1/4} y = x^3 \end{equation}
  We see that $E$ and $E^{\sst}$ are isomorphic over $K'$ and that the reduction $\bar{E}^{\sst}$ is again the nonsingular curve given by $y^2 + y = x^3$.

\end{ex}

\subsection{The $v(\lambda) \geq 4$ case} \label{S3}

For this section, we adopt exactly the same set-up but treat the complimentary case where $v(j(E)) \leq 0$.  In this case, we see from the formula in (\ref{eq j}) that we have $m := v(\lambda) \geq 4$.  Therefore, under this assumption, any semistable model $E^{\sst}$ should have good reduction if and only if $m = 4$; otherwise $E^{\sst}$ has multiplicative reduction.  As in \S\ref{S2}, we will show how to find algebraic elements $\alpha, \beta \in \bar{K}$ such that evaluating $a_1, a_2, a_3 \in K' := K(\beta^{1/2}, \alpha, F(0)^{1/2})$ using the formulas in (\ref{eq formulas2}) yields an equation of the form in (\ref{eq model}) (with $a_4 = a_6 = 0$) for an elliptic curve with semistable reduction.

\begin{thm} \label{thm m>4}

Assume that $m \geq 4$.  Let $\beta \in (K^{\times})^2$ be any element such that $v(\beta) = 2$ (e.g. $\beta = 4$), and choose an element $\alpha \in K$ such that $2 \leq v(\alpha) \leq m - 2$.  Then $E$ is isomorphic over $K' := K(F(0)^{1/2})$ to the elliptic curve $E^{\sst}$ given by the equation in (\ref{eq model}), where $a_4 = a_6 = 0$ and the other coefficients $a_i$ are given by the formulas in (\ref{eq formulas2}).

We have $v(a_1) = 0$, $v(a_2) \geq 0$, and $v(a_3) = v(\alpha) - 2$ (when $v(\alpha) > 2$, this directly implies that $E^{\sst}$ has multiplicative reduction).  The curve $E^{\sst}$ has good reduction if $m = 4$ and has multiplicative reduction otherwise.

\end{thm}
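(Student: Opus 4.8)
The plan is to run the same template as in the proof of Theorem~\ref{thm m<4}: determine $v(\alpha)$, $v(\alpha-1)$, $v(\alpha-\lambda)$, feed these into the identities in (\ref{eq formulas3}) to obtain $v(F(0))$, $v(F^{(1)}(0))$, $v(F^{(2)}(0))$, and then read off $v(a_1), v(a_2), v(a_3)$ from (\ref{eq formulas2}). Since here $2 \le v(\alpha) \le m-2 < m = v(\lambda)$ and $v(\lambda-1)=0$, we immediately get $v(\alpha-1)=0$ and $v(\alpha-\lambda)=v(\alpha)$ with no Newton-polygon argument needed (in contrast to Theorem~\ref{thm m<4}, $\alpha$ is now a free parameter rather than a root of $P$). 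The relation $\beta^3 F(0)=\alpha(\alpha-1)(\alpha-\lambda)$ then gives $v(F(0))=2v(\alpha)-3v(\beta)=2v(\alpha)-6$, and hence $v(a_3)=v(2)+\tfrac12 v(F(0))=v(\alpha)-2$, as claimed.

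For $v(a_1)$ I would regroup the middle identity in (\ref{eq formulas3}) as $\beta^2 F^{(1)}(0) = (\alpha-1)(2\alpha-\lambda) + \alpha(\alpha-\lambda)$. Because $v(2\alpha)=v(\alpha)+1<m$, we have $v(2\alpha-\lambda)=v(\alpha)+1$, so the first summand has valuation $v(\alpha)+1$ while the second has valuation $2v(\alpha)>v(\alpha)+1$; thus $v(F^{(1)}(0))=v(\alpha)-3$ and $v(a_1)=v(F^{(1)}(0))-\tfrac12 v(F(0))=0$. For $v(a_2)\ge 0$ I would reuse the polynomial identity (\ref{eq a_2}) established inside the proof of Theorem~\ref{thm m<4}, namely $4\beta^4 F(0)a_2 = 3\alpha^4-4(1+\lambda)\alpha^3+6\lambda\alpha^2-\lambda^2$, and simply check that each of the four terms on the right has valuation at least $v(4\beta^4 F(0))=4+2v(\alpha)$. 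The four resulting conditions collapse exactly to $v(\alpha)\ge 2$, $v(\alpha)\ge 2$, $m\ge 3$, and $v(\alpha)\le m-2$, all of which are among the hypotheses; the ultrametric inequality then yields $v(a_2)\ge 0$. These three computations are routine.

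The substantive part is the reduction type. Since $v(a_1)=0$, the coefficient $a_1$ is a unit, so $\bar E^{\sst}$ has no cusp. When $m>4$ one has room to choose $v(\alpha)>2$, and then $v(a_3)=v(\alpha)-2>0$ forces $\bar a_3=0$, so by the remark following (\ref{eq model}) the reduction acquires a node at $(0,0)$ and $E^{\sst}$ has multiplicative reduction; this is the clean route and needs nothing beyond the valuations already found. (The boundary choice $v(\alpha)=2$ is also permissible when $m>4$, and is settled by the same discriminant computation described below, which there returns $\bar\Delta=0$; but the generic choice $v(\alpha)>2$ avoids this.) The hard case is $m=4$, where $v(\alpha)=2$ is forced and $a_1,a_2,a_3$ all turn out to be units, so the valuation-based node/cusp criteria are silent and one genuinely has to decide whether $\bar E^{\sst}$ is smooth.

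I expect this last step to be the main obstacle. The plan is to compute the reductions $\bar a_1,\bar a_2,\bar a_3$ explicitly (for instance with $\beta=4$, writing $\alpha=4u$ and $\lambda=16w$ for units $u,w$) and then either reduce the Weierstrass discriminant $\Delta$ or locate the unique candidate singular point of $\bar E^{\sst}$ (the common zero of the two partial derivatives) and test whether it lies on $\bar E^{\sst}$. The difficulty is that this is essentially characteristic-$2$ bookkeeping: the valuations of the individual monomials of $\Delta$ do not settle smoothness, and it is only after the cancellations forced by $\mathrm{char}=2$ that $\bar\Delta$ collapses to a nonzero quantity (in the $\beta=4$ normalization, exactly $\overline{(\lambda/16)}^{2}$), which is nonzero precisely because $v(\lambda\beta^{-2})=m-4=0$. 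This shows $\bar E^{\sst}$ is smooth, giving good reduction when $m=4$ and completing the dichotomy; note that the argument never invokes the $j$-invariant, in keeping with the aims stated in the introduction.
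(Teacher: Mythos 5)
Your proposal is correct and is essentially the paper's proof: the valuation computations for $a_1, a_2, a_3$ (including the reuse of the identity (\ref{eq a_2})) and the node-at-$(0,0)$ argument when $v(\alpha) > 2$ are identical to the paper's, and for the remaining case $v(\alpha) = 2$ your characteristic-$2$ discriminant test coincides with the paper's singular-point criterion, since the paper's vanishing condition (\ref{eq nonsingularity}) is exactly $\bar{\Delta}/\bar{a}_1^6$ in characteristic $2$. Your asserted value $\bar{\Delta} \equiv \overline{(\lambda/16)}^{2}$ (for $\beta = 4$, $v(\alpha) = 2$) checks out against the paper's computation, which evaluates the quantity built from (\ref{eq nonsingularity2}) and (\ref{eq nonsingularity3}) to $-\lambda^2/(4\alpha^2\beta)$ modulo the maximal ideal, a unit multiple of $(\lambda/16)^2$.
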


\begin{proof}

First of all, we note that $v(\alpha - 1) = 0$.  The condition that $v(\alpha) \leq m - 2$ ensures that $v(\lambda) > v(\alpha)$, so $v(\alpha - \lambda) = v(\alpha)$.  Therefore, we have 
\begin{equation} v(F(0)) = v(\alpha) + v(\alpha - 1) + v(\alpha - \lambda) - 3v(\beta) = 2v(\alpha) - 6; \end{equation}
 and 
\begin{equation} v(F^{(1)}(0)) = v(2\alpha(\alpha - 1) - \lambda(\alpha - 1) + \alpha(\alpha - \lambda)) - 2v(\beta) = \min\{v(\alpha) +1, m, 2v(\alpha)\} - 4 = v(\alpha) - 3. \end{equation}
  It follows that $v(a_3) = v(\alpha) - 2 \geq 0$ and $v(a_1) = 0$; in particular, $v(a_3) = 0$ if and only if $m = 4$.

We next check that $v(a_2) \geq 0$.  In order to do so, we recall the formula in (\ref{eq a_2}) which we derived earlier: 
\begin{equation} \label{eq a_2 m>4} 4\beta^4 F(0)a_2 = 3\alpha^4 - 4(1 + \lambda)\alpha^3 + 6\lambda \alpha^2 - \lambda^2. \end{equation}
  Since $\min\{v(3\alpha^4), v(4(1 + \lambda)\alpha^3), v(6\lambda\alpha^2), v(\lambda^2)\} = \min\{4v(\alpha), 3v(\alpha)+ 2, 2v(\alpha) + m + 1, 2m\} \geq 2v(\alpha) + 4$, we have $v(a_2) = v(4F(0)\beta^4 a_2) - v(4F(0)) - 4v(\beta) \geq 2v(\alpha) + 4 - (2v(\alpha) - 4) - 8 = 0$, as desired.

Finally we assume that $v(\alpha) = 2$ and set out to show that the curve $E^{\sst}$ has good reduction if and only if $m = 4$.  Any singular point $(x, y)$ on $\bar{E}^{\sst}$ satisfies the following set of equations.
\begin{equation} \begin{split} y^2 + \bar{a}_1 xy + \bar{a}_3 y &= x^3 + \bar{a}_2 x^2 \\ 
\bar{a}_1 y &= x^2 \\ 
\bar{a}_1 x + \bar{a}_3 &= 0 \end{split} \end{equation}
By solving for $x$ and $y$ in the bottom two equations and plugging the results in the top equation, we see that if such a point $(x, y)$ exists, we must have 
\begin{equation} \label{eq nonsingularity} \frac{\bar{a}_3^4}{\bar{a}_1^6} + \frac{\bar{a}_3^3}{\bar{a}_1^3} - \frac{\bar{a}_2\bar{a}_3^2}{\bar{a}_1^2} = 0
\end{equation}
 which, after dividing by $\frac{\bar{a}_3^2}{\bar{a}_1^2}$ and simplifying, yields 
\begin{equation}
\frac{\bar{a}_3}{\bar{a}_1}\Big(\frac{\bar{a}_3}{\bar{a}_1^3} + 1 \Big) - \bar{a}_2 = 0.
\end{equation}
  We now show that this is the case if and only if $m > 4$.  We compute the following equivalences modulo the prime ideal of $R'$, using the formulas in (\ref{eq formulas3}).
\begin{equation} \label{eq nonsingularity2} \begin{split} \frac{a_3}{a_1} = \frac{4F(0)}{2F^{(1)}(0)} &\equiv \frac{-4\alpha^2\beta^{-3}}{-4\alpha\beta^{-2}} = \frac{\alpha}{\beta} \\ 
\frac{a_3}{a_1^3} = \Big(\frac{a_3}{a_1}\Big)\Big(\frac{4F(0)}{4F^{(1)}(0)^2}\Big) &\equiv \Big(\frac{\alpha}{\beta}\Big) \frac{-4\alpha^2\beta^{-3}}{16\alpha^2\beta^{-4}} = -\frac{\alpha}{4} \end{split}\end{equation}
  Meanwhile, using what we know from (\ref{eq a_2 m>4}), we compute the equivalence 
\begin{equation} \label{eq nonsingularity3} a_2 = \frac{\beta^{-4}(3\alpha^4 - 4(1 + \lambda)\alpha^3 + 6\lambda \alpha^2 - \lambda^2)}{4F(0)} \equiv \frac{\beta^{-4}(\alpha^4 - 4\alpha^3 - \lambda^2)}{-4\alpha^2\beta^{-3}} = -\frac{\alpha^2}{4\beta} + \frac{\alpha}{\beta} + \frac{\lambda^2}{4\alpha^2\beta}. \end{equation}
  Putting (\ref{eq nonsingularity2}) and (\ref{eq nonsingularity3}) together, we get 
\begin{equation} \frac{a_3}{a_1}\Big(\frac{a_3}{a_1^3} + 1\Big) - a_2 \equiv \frac{\alpha}{\beta} \Big(-\frac{\alpha}{4} + 1\Big) + \frac{\alpha^2}{4\beta} - \frac{\alpha}{\beta} - \frac{\lambda^2}{4\alpha^2\beta} = -\frac{\lambda^2}{4\alpha^2\beta}. \end{equation}
  Since the valuation of the right-hand term is $2m - 2 - 4 - 2 = 2m - 8$, the above expression is equivalent to $0$ if and only if $m > 4$, and we are done.

\end{proof}

\begin{rmk}

It was pointed out to the author by Leonardo Fiore that in the situation of Theorem \ref{thm m>4}, a semistable model can be obtained by choosing $\alpha$ to be any element satisfying $v(\alpha) \geq 2$ (e.g. $\alpha = 0$), as long as we allow the possibility that $a_4 \neq 0$ or $a_6 \neq 0$.  Indeed, there is an isomorphism (defined over $R$) between any two such models induced by translating $x$ by the integral element $\beta^{-1}(\alpha_1 - \alpha_2) \in R$, where $\alpha_1$ and $\alpha_2$ are the choices of $\alpha$ determining the models.

\end{rmk}

\

We now recall that the $2$-torsion subgroup $E[2] \subset E(\bar{K})$ is given by $\{\mathcal{O}, (0, 0), (1, 0), (\lambda, 0)\}$. 

\begin{cor} \label{cor 2-torsion}

Assume that $m > 4$ and construct the semistable model $E^{\sst}$ of $E$ as in the statement of Theorem \ref{thm m>4}. The reduction of the $2$-torsion subgroup $E^{\sst}[2]$ coincides with the subset consisting of the infinity point $\bar{\mathcal{O}}$ and the cusp $P$ of $\bar{E}^{\sst}$; the inverse images of $\{\bar{\mathcal{O}}\}$ and $\{P\}$ correspond to the subgroup $\{\mathcal{O}, (1, 0)\} \subset E[2]$ and its coset $\{(0, 0), (\lambda, 0)\} \subset E[2]$ respectively.

\end{cor}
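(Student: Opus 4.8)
The plan is to track each of the three nontrivial $2$-torsion points $(0,0)$, $(1,0)$, $(\lambda,0)$ through the explicit isomorphism $\varphi : E \stackrel{\sim}{\to} E^{\sst}$ described in Theorem \ref{thm m>4}, compute the valuations of their coordinates on $E^{\sst}$, and reduce. Recall that $\varphi$ is the composite of $(x,y)\mapsto(\alpha+\beta x,\beta^{3/2}y)$ with $(x,y)\mapsto(x,y+\tfrac{1}{2}(a_1 x + a_3))$. Since each $2$-torsion point has $y$-coordinate $0$ on $E$, its image under the first map has $y$-coordinate $0$ as well, so its image on $E^{\sst}$ is of the form $(x_0,\tfrac{1}{2}(a_1 x_0 + a_3))$ where $x_0$ is the transformed $x$-coordinate. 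I would therefore compute $x_0$ for each of the three points and determine whether the resulting point reduces to $\bar{\mathcal{O}}$ or to the cusp $P$.

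\emph{First} I would identify the cusp. In the proof of Theorem \ref{thm m>4} the singular point of $\bar E^{\sst}$ arises from the equations $\bar a_1 y = x^2$ and $\bar a_1 x + \bar a_3 = 0$; solving gives the cusp coordinates $P = (x_P, y_P)$ with $x_P = -\bar a_3/\bar a_1$ and $y_P = x_P^2/\bar a_1 = \bar a_3^2/\bar a_1^3$, and I would record that $\bar a_3/\bar a_1 \equiv \alpha/\beta$ from (\ref{eq nonsingularity2}). \emph{Next}, the point $(1,0)\in E$ maps under the first coordinate change to $x$-coordinate $\alpha+\beta$, which (since $v(\beta)=2$ and $v(\alpha)\geq 2$) is a nonzero integral element, reducing to a nonzero value; I expect to find that this combined with $y$-coordinate $\tfrac12(a_1(\alpha+\beta)+a_3)$ forces the image to reduce to the point at infinity $\bar{\mathcal O}$ once I account for the projective normalization, consistent with $\{\mathcal O,(1,0)\}$ being the subgroup mapping to $\bar{\mathcal O}$. \emph{Then} for $(0,0)$ and $(\lambda,0)$ the transformed $x$-coordinates are $\alpha$ and $\alpha+\beta\lambda$ respectively; I would verify that both reduce to $x_P$ modulo the maximal ideal (using $v(\beta\lambda)=2+m>v(\alpha)$ when $v(\alpha)=2$, so $\alpha+\beta\lambda\equiv\alpha$) and that the corresponding $y$-coordinates reduce to $y_P$, placing both points at the cusp.

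\emph{The main obstacle} I anticipate is handling the point at infinity and the projective book-keeping correctly: on $E$ the identity $\mathcal O$ is the point at infinity, and I must confirm that $\varphi(\mathcal O)=\bar{\mathcal O}$ on $E^{\sst}$ and, more delicately, that $(1,0)$ also reduces to $\bar{\mathcal O}$ rather than to an affine point. Since the reduction map on a semistable model is a group homomorphism on the points with nonsingular reduction, and $\{\mathcal O,(1,0)\}$ is a subgroup of $E[2]$ while $\{(0,0),(\lambda,0)\}$ is its nontrivial coset, the group-theoretic structure gives a useful consistency check: the reduction of $E^{\sst}[2]$ must be a subgroup of the smooth locus together with possibly the node, and the two cosets must map to the two points $\bar{\mathcal O}$ and $P$ compatibly with the group law. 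I would use this homomorphism property both to organize the computation and to confirm that the pairing of cosets with $\{\bar{\mathcal O}\}$ and $\{P\}$ is forced once I have checked that $(0,0)$ (say) reduces to the cusp, thereby reducing the remaining verifications to the single valuation computation for $(0,0)$.
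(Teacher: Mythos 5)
Your overall plan is the same as the paper's: push the three affine $2$-torsion points through $\varphi$, see where they reduce, and compare with the singular point $P$ and with $\bar{\mathcal{O}}$. But there is a genuine error at the core of your computation: you have the isomorphism pointed in the wrong direction. The formula $(x,y)\mapsto(\alpha+\beta x,\beta^{3/2}y)$ in the theorem is the \emph{substitution} that converts the equation $y^2=x(x-1)(x-\lambda)$ into $y^2=F_{\alpha,\beta}(x)$; as a map on points it goes from the new model back to $E$, so $\varphi$ itself sends a point $(x,y)\in E$ to a point whose first coordinate is $\beta^{-1}(x-\alpha)$. You can check this directly from (\ref{eq model F}): the roots of $F_{\alpha,\beta}$ are $-\alpha\beta^{-1}$, $\beta^{-1}(1-\alpha)$, and $\beta^{-1}(\lambda-\alpha)$, i.e.\ the images of $0,1,\lambda$ under $x\mapsto\beta^{-1}(x-\alpha)$, not under $x\mapsto\alpha+\beta x$.

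This is not a cosmetic slip; with your orientation the verifications you propose would fail, not succeed. You send $(1,0)$ to $x$-coordinate $\alpha+\beta$, which is integral of valuation $2$, so that point reduces to an \emph{affine} point with $\bar{x}=0$; no ``projective normalization'' can make a point with integral affine coordinates reduce to $\bar{\mathcal{O}}$. Likewise you send $(0,0)$ and $(\lambda,0)$ to $x$-coordinates $\alpha$ and $\alpha+\beta\lambda$, both of which reduce to $0$, whereas $P$ has $x$-coordinate $-\bar{a}_3/\bar{a}_1=-\bar{\alpha}/\bar{\beta}$, which is \emph{nonzero} when $v(\alpha)=2$; so your claimed congruence to $x_P$ is false in general. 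With the correct orientation everything works in one line each: $v(\beta^{-1}(1-\alpha))=-2<0$, so $\varphi((1,0))$ reduces to $\bar{\mathcal{O}}$; and $\beta^{-1}(0-\alpha)$, $\beta^{-1}(\lambda-\alpha)$ both reduce to $-\bar{\alpha}/\bar{\beta}$ (using $v(\beta^{-1}\lambda)=m-2>0$), the $x$-coordinate of $P$. Two further remarks: the paper avoids your $y$-coordinate computation entirely by observing that $P$ (resp.\ $\bar{\mathcal{O}}$) is the \emph{unique} point of $\bar{E}^{\sst}$ with its first coordinate, since the vertical line through a singular point of a Weierstrass cubic meets the curve nowhere else in the affine plane; and the group-theoretic idea in your last paragraph does have a sound core (in characteristic $2$ the group $\bar{k}^{\times}$ underlying the smooth locus of a nodal cubic has trivial $2$-torsion, so any $2$-torsion point landing in the smooth locus must reduce to $\bar{\mathcal{O}}$), but it cannot rescue a computation whose coordinate formulas are inverted.
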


\begin{proof}

It is clear that the infinity point $\mathcal{O}$ of $E$ gets sent to $\bar{\mathcal{O}} \in \bar{E}^{\sst}$.  Now since $\varphi : E \stackrel{\sim}{\to} E^{\sst}$ sends the first coordinate of any point $(x, y) \in E(K') \smallsetminus \{\mathcal{O}\}$ to $\beta^{-1}(x - \alpha)$, we see that the first coordinate of the image $\varphi((1, 0)) \in E^{\sst}(K')$ (resp. of each image $\varphi((0, 0)), \varphi((\lambda, 0)) \in E^{\sst}(K')$) reduces to $\infty$ (resp. $-\frac{\bar{\alpha}}{\bar{\beta}}$).  As in the proof of Theorem \ref{thm m>4}, the cusp $P$ has $x$-coordinate $-\frac{\bar{a}_3}{\bar{a}_1} = -\frac{\bar{\alpha}}{\bar{\beta}}$.  Since $\bar{\mathcal{O}}$ (resp. $P$) is the only point of $\bar{E}^{\sst}$ whose first coordinate is $\infty$ (resp. $-\frac{\bar{\alpha}}{\bar{\beta}}$), we are done.

\end{proof}

\begin{rmk}

In a similar fashion to how we proved the above corollary, it is straightforward to show directly from Theorem \ref{thm m>4} (resp. Theorem \ref{thm m<4}) that in the case that $v(\lambda) = 4$ (resp. $0 \leq v(\lambda) < 4$), the elements $\mathcal{O}, (1, 0) \in E[2]$ are mapped via $\varphi$ composed with reduction to the infinity point $\bar{\mathcal{O}}$ of $\bar{E}^{\sst}$ and the elements $(0, 0), (\lambda, 0) \in E[2]$ map to another point of $\bar{E}^{\sst}$ (resp. the elements of $E[2]$ are all mapped to the infinity point $\bar{\mathcal{O}}$ of $\bar{E}^{\sst}$).  Since the image of $E[2]$ under $\varphi$ composed with reduction must be contained in the $2$-torsion subgroup $\bar{E}^{\sst}[2]$, we see in this way that when $v(\lambda) = 4$ (or equivalently, when $j(\bar{E}^{\sst}) \neq 0$), the reduced curve $\bar{E}^{\sst}$ is ordinary.  This is one direction of the equivalence given in \cite[Exercise 5.7]{silverman2009arithmetic}, which states that an elliptic curve over a field of characteristic $2$ is supersingular if and only if its $j$-invariant is $0$.  Since the other direction of that equivalence implies that $\bar{E}^{\sst}$ is supersingular in the $v(\lambda) < 4$ case, we see that $\bar{E}^{\sst}[2] = \{\bar{\mathcal{O}}\}$ coincides with the reduction of $E[2] \cong E^{\sst}[2]$.

\end{rmk}

\begin{ex} \label{ex m=4}

Consider the elliptic curve $E / \qq_2$ given by $y^2 = x(x - 1)(x - 16)$.  This curve is already in Legendre form with $\lambda = 16$.  Since $m = v(\lambda) = 4$, any semistable model $E^{\sst}$ will have good reduction, and we may apply Theorem \ref{thm m>4}.

We set $\alpha = \beta = 4$, noting that $v(\alpha) = 2 = m - 2$.  Now evaluating the formulas in (\ref{eq formulas2}) yields the following equation for $E^{\sst}$, over the extension $K' := \qq_2(i)$.
\begin{equation} y^2 + 3ixy + 3iy = x^3 + x^2 \end{equation}
  It is easy to check directly that the reduction $\bar{E}^{\sst}$, given by $y^2 + xy + y = x^3 + x^2$, is nonsingular.

\end{ex}

\begin{ex} \label{ex m=6}

Consider the elliptic curve $E / \qq_2$ given by $y^2 = x(x - 1)(x - 64)$.  The curve is again already in Legendre form, this time with $\lambda = 64$ so $m = v(\lambda) = 6$.  Again, we may apply Theorem \ref{thm m>4}, but in this case, the semistable model $E^{\sst}$ we arrive at will have multiplicative reduction.

As before, we set $\beta = 4$, but this time, we let $\alpha = 8$, noting that $2 < v(\alpha) = 3 \leq m - 4$.  Now evaluating the formulas in (\ref{eq formulas2}) yields the following equation for $E^{\sst}$, over the extension $K' := \qq_2(i)$.
\begin{equation} y^2 + 7ixy + 14iy = x^3 + 2x^2 \end{equation}
The reduction $\bar{E}^{\sst}$ is $y^2 + xy = x^3$, which visibly has a node at the point $(0, 0)$; hence, $E^{\sst}$ has (split) multiplicative reduction, as expected.

\end{ex}

\subsection{Acknowledgements}

The author is grateful to the referee, whose suggestions have helped to improve the exposition and to place this work in a broader context.

\bibliographystyle{plain}
\bibliography{bibfile}

\end{document}